\theoremstyle{plain} 
\newtheorem{proposition}[equation]{Proposition}
\newtheorem{theorem}[equation]{Theorem}
\newtheorem{corollary}[equation]{Corollary}
\newtheorem{conjecture}[equation]{Conjecture}
\numberwithin{equation}{section}
\title[Improving estimates for discrete polynomial averages]{Improving estimates for discrete polynomial averages}
\author[R. Han]{Rui Han}
\address{Rui Han, School of Mathematics, Georgia Institute of Technology, Atlanta GA 30332, USA}
\email{rui.han@math.gatech.edu}
\thanks{R. H. is supported in part by National Science Foundation grant DMS-1800689.}
\author[V. Kova\v{c}]{Vjekoslav Kova\v{c}}
\address{Vjekoslav Kova\v{c}, Department of Mathematics, Faculty of Science, University of Zagreb, Bijeni\v{c}ka cesta 30, 10000 Zagreb, Croatia}
\email{vjekovac@math.hr}
\thanks{V. K. is supported in part by the Croatian Science Foundation under project UIP-2017-05-4129 (MUNHANAP), and by the Fulbright Scholar Program. He also appreciates hospitality of the Georgia Institute of Technology in the academic year 2019--2020.}
\author[M. T. Lacey]{Michael T. Lacey}
\address{Michael T. Lacey, School of Mathematics, Georgia Institute of Technology, Atlanta GA 30332, USA}
\email{lacey@math.gatech.edu}
\thanks{M. T. L. is supported in part by National Science Foundation grant DMS-1600693, and by Australian Research Council grant DP160100153.}
\author[J. Madrid]{Jos\'{e} Madrid}
\address{Jos{\'e} Madrid, Department of Mathematics, University of California Los Angeles, Los Angeles CA  90095-1555}
\email{jmadrid@math.ucla.edu}
\author[F. Yang]{Fan Yang}
\address{Fan Yang, School of Mathematics, Georgia Institute of Technology, Atlanta GA 30332, USA}
\email{ffyangmath@gmail.com}
\thanks{F. Y. is supported in part by NSF DMS-1600693 and AMS-Simons Travel grant 2019--2021.}
\date{\today}
\subjclass[2020]{Primary 42A45; 
Secondary 42B15, 
11L07} 
\keywords{improving estimate, polynomial, discrete average, discrete fractional integral}
\begin{document}

\begin{abstract}
For a polynomial $P$ mapping the integers into the integers, define an averaging operator $A_{N} f(x):=\frac{1}{N}\sum_{k=1}^N f(x+P(k))$ acting on functions on the integers. We prove sufficient conditions for the $\ell^{p}$-improving inequality
\begin{equation*}
\|A_N f\|_{\ell^q(\mathbb{Z})} \lesssim_{P,p,q} N^{-d(\frac{1}{p}-\frac{1}{q})} \|f\|_{\ell^p(\mathbb{Z})}, \qquad  N \in\mathbb{N},
\end{equation*}
where $1\leq p \leq q \leq \infty$. For a range of quadratic polynomials, the inequalities established are sharp, up to the boundary of the allowed pairs of $(p,q)$. For degree three and higher, the inequalities are close to being sharp. In the quadratic case, we appeal to discrete fractional integrals as studied by Stein and Wainger. In the higher degree case, we appeal to the Vinogradov Mean Value Theorem, recently established by Bourgain, Demeter, and Guth.
\end{abstract}

\maketitle

\section{Introduction}
Discrete Radon averaging operators are our focus.
Let $ P$ be a polynomial of one variable mapping the integers to the integers and of degree $ d$.  Set an \emph{average} and a \emph{fractional integral} operator to be
\begin{align}  \label{eq:polyaver}
 A _{N} f(x)&:=\frac{1}{N}\sum_{k=1}^N f(x+P(k)) ,
 \\  \label{e:I}
 I _{\lambda } f (x) &:= \sum_{k=1} ^{\infty }  \frac {f (x+ P (k))} {k ^{\lambda }}, \qquad 0< \lambda < 1.
\end{align}
Throughout, functions $ f$ can be assumed to be finitely supported.
We write $A\lesssim B$ if there exists an absolute constant $C$ such that  $A\leq CB$. If the constant depends on parameters $\lambda,\mu,\ldots$  we denote that with a subscript, such as $A\lesssim_{\lambda,\mu,\ldots}B$.
The inequalities of interest are
\begin{align}\label{eq:polyest}
\|A_N f\|_{\ell^q(\mathbb{Z})} & \lesssim_{P,p,q} N^{-d(\frac{1}{p}-\frac{1}{q})} \|f\|_{\ell^p(\mathbb{Z})},   \qquad N \in \mathbb N ,   \  1<p < q < \infty .
\\  \label{e:Iest}
\| I _{\lambda } f\|_{\ell^q(\mathbb{Z})} & \lesssim_{P,\lambda,p,q}   \|f\|_{\ell^p(\mathbb{Z})}, \qquad 1 < p < q < \infty .
\end{align}
The inequality in \eqref{eq:polyest} should hold \emph{uniformly in $ N \in \mathbb N $}.
Using the notation $\lesssim_{P,p,q}$ we emphasize that the estimate has to be uniform in $N$ and $f$, but the implicit constant is allowed to depend on $P$, $p$, and $q$. One should similarly interpret the notation $\lesssim_{P,\lambda,p,q}$ in \eqref{e:Iest}. The exponent in \eqref{eq:polyest}, $-d(1/p-1/q)$,  is the best possible one, as is trivially seen by taking $f=\mathbbm{1}_{\{1,2,3,\ldots,2P(N)\}}$ and letting $N\to\infty$.

Set $ \mathcal A (P)$ to be the set of all $ (p,q)$ for which \eqref{eq:polyest} holds.  Set $ \mathcal I (P)$  to be the set of all $ (p,q, \lambda )$
for which \eqref{e:Iest} holds.
These two classes are related through

\begin{proposition}\label{p:IA} These two relations between $ \mathcal A (P)$ and $ \mathcal I (P)$ hold, where $ d$ is the degree of $ P$.
\begin{enumerate}\setlength\itemsep{.75em}
\item  If $ (p,q) \in \mathcal A (P)$, then $ (p,q, \lambda ) \in \mathcal I (P)$ for
$   0<  1-\lambda  < \min  \{1,  d (1/p - 1/q) \} $.
\item If $ (p,q, 1 - d (1/p - 1/q) ) \in \mathcal I (P)$, then $ (p,q) \in \mathcal A (P)$.
\end{enumerate}
\end{proposition}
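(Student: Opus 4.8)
The plan is to prove both implications by elementary pointwise comparisons between $A_N$ and $I_\lambda$, after reducing to the case $f\ge 0$. That reduction is harmless: $|A_N f|\le A_N|f|$ and $|I_\lambda f|\le I_\lambda|f|$ pointwise, while $\||f|\|_{\ell^p(\mathbb Z)}=\|f\|_{\ell^p(\mathbb Z)}$; and for finitely supported $f$ the series defining $I_\lambda f(x)$ has only finitely many nonzero terms, since $P(k)\to\pm\infty$ and $P$ attains each value at most $d$ times.

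For item (2), I would fix $f\ge 0$, set $\lambda=1-d(1/p-1/q)$ (which the hypothesis presupposes to lie in $(0,1)$), and fix $N\in\mathbb N$. Since $k^{-\lambda}\ge N^{-\lambda}$ for $1\le k\le N$, discarding the tail $k>N$ gives
\[
I_\lambda f(x)\ \ge\ \sum_{k=1}^{N}\frac{f(x+P(k))}{k^\lambda}\ \ge\ N^{-\lambda}\sum_{k=1}^{N}f(x+P(k))\ =\ N^{1-\lambda}A_N f(x),
\]
so $A_N f\le N^{-(1-\lambda)}I_\lambda f$ pointwise; applying \eqref{e:Iest} then yields $\|A_N f\|_{\ell^q(\mathbb Z)}\le N^{-(1-\lambda)}\|I_\lambda f\|_{\ell^q(\mathbb Z)}\lesssim_{P,p,q} N^{-d(1/p-1/q)}\|f\|_{\ell^p(\mathbb Z)}$ uniformly in $N$, i.e.\ \eqref{eq:polyest} and $(p,q)\in\mathcal A(P)$.

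For item (1), I would split $I_\lambda$ dyadically in $k$: $I_\lambda f=\sum_{j\ge 0}B_j f$ with $B_j f(x):=\sum_{2^j\le k<2^{j+1}}k^{-\lambda}f(x+P(k))$. On the $j$-th block $k\ge 2^j$ forces $k^{-\lambda}\le 2^{-j\lambda}$, whence
\[
B_j f(x)\ \le\ 2^{-j\lambda}\sum_{k=1}^{2^{j+1}}f(x+P(k))\ =\ 2^{j+1-j\lambda}A_{2^{j+1}}f(x).
\]
By the triangle inequality in $\ell^q$ (valid as $q\ge 1$) and then \eqref{eq:polyest} with $N=2^{j+1}$ (using $(p,q)\in\mathcal A(P)$),
\[
\|I_\lambda f\|_{\ell^q(\mathbb Z)}\ \le\ \sum_{j\ge 0}\|B_j f\|_{\ell^q(\mathbb Z)}\ \lesssim_{P,p,q}\ \Big(\sum_{j\ge 0}2^{j(1-\lambda-d(1/p-1/q))}\Big)\|f\|_{\ell^p(\mathbb Z)},
\]
and the geometric series converges exactly when $1-\lambda<d(1/p-1/q)$; the other constraint $1-\lambda<1$ just records $\lambda>0$. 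Together these are the stated hypotheses, giving \eqref{e:Iest} and $(p,q,\lambda)\in\mathcal I(P)$.

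I do not anticipate any serious obstacle here — the proposition is essentially a bookkeeping statement. The only points deserving care are the reduction to $f\ge 0$, the finiteness of $I_\lambda f(x)$ for finitely supported $f$, and matching the exponents so that the geometric series in item (1) converges on precisely the asserted range of $\lambda$.
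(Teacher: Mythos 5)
Your proposal is correct and follows essentially the same route as the paper: item (1) via the dyadic block decomposition of $I_\lambda$ and summing the resulting geometric series, and item (2) via the pointwise domination $A_N f \le N^{-1+\lambda} I_\lambda f$ for $f\ge 0$. The paper merely states these two steps more tersely; your added care about the reduction to $f\ge 0$ and the convergence of the series is consistent with what the paper leaves implicit.
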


We will define two closely related concepts in \S\ref{s:sparse}, and phrase some conjectures about them.

Concerning the collection $ \mathcal I (P)$, the relevant conjecture \cite{MR2820148}*{pg. 597} is

\begin{conjecture}\label{j:I}  The inequality \eqref{e:Iest} holds if and only if
$ 1- \lambda  \leq  d (1/p - 1/q)$, $ 1/q < \lambda $, and $ 1- \lambda < 1/p$.
\end{conjecture}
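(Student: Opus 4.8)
The statement is an ``if and only if,'' so I would split it into necessity of the three conditions and sufficiency. For \textbf{necessity} each inequality should be forced by a single extremal input. For $1-\lambda\le d(1/p-1/q)$ I would reuse the block $f=\mathbbm{1}_{\{1,\dots,P(N)\}}$ that already witnesses the sharpness of \eqref{eq:polyest}: since $P(k)\le P(N)$ for $k\lesssim N$, one has $I_\lambda f(x)\gtrsim\sum_{k\le cN}k^{-\lambda}\gtrsim N^{1-\lambda}$ on a set of $x$ of size $\sim P(N)\sim N^{d}$, whence $\|I_\lambda f\|_{\ell^q}\gtrsim N^{1-\lambda+d/q}$ while $\|f\|_{\ell^p}\sim N^{d/p}$, and $N\to\infty$ gives $1-\lambda\le d(1/p-1/q)$. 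For $1/q<\lambda$ I would test the point mass $f=\mathbbm{1}_{\{0\}}$: because $P$ is injective on large integers, $I_\lambda\mathbbm{1}_{\{0\}}$ is supported on $\{-P(k)\}$ with value $k^{-\lambda}$, so $\|I_\lambda\mathbbm{1}_{\{0\}}\|_{\ell^q}^q\sim\sum_k k^{-\lambda q}$ is finite precisely when $\lambda q>1$. Finally $1-\lambda<1/p$ is dual to the previous condition: $I_\lambda$ is a convolution operator, its adjoint has the same shape (with $P$ reflected), and applying the point-mass test to $I_\lambda^{\ast}\colon\ell^{q'}\to\ell^{p'}$ forces $1/p'<\lambda$.

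For \textbf{sufficiency} I would first record the two ``free'' edges coming from Young's inequality. Writing $K(y)=\sum_{P(k)=-y}k^{-\lambda}$ so that $I_\lambda f=f\ast K$, the identity $\|K\|_{\ell^{r}}^{r}\sim\sum_k k^{-\lambda r}$ yields $\ell^{1}\to\ell^{q}$ whenever $1/q<\lambda$ and, dually, $\ell^{p}\to\ell^{\infty}$ whenever $1-\lambda<1/p$; together with the trivial $\ell^{1}\to\ell^{\infty}$ these handle the edges $1/p=1$ and $1/q=0$ of the conjectured region. The substance lies on the scaling constraint $1-\lambda=d(1/p-1/q)$. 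There I would decompose $I_\lambda$ dyadically as $I_\lambda f=\sum_{j\ge0}2^{(1-\lambda)j}A^{(j)}f$ with $A^{(j)}f(x)=2^{-j}\sum_{k\sim2^{j}}f(x+P(k))$. Strictly below the scaling line the improving bound $\|A^{(j)}\|_{\ell^p\to\ell^q}\lesssim 2^{-d(1/p-1/q)j}$ makes the series geometric and summable, which is exactly Proposition~\ref{p:IA}(1); but on the line the series fails to converge, and by Proposition~\ref{p:IA}(2) a bound there is at least as strong as the sharp endpoint of the averaging inequality \eqref{eq:polyest}, so the two problems are essentially equivalent on the scaling line.

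To assemble these ingredients into the full open region I would view it, for fixed $\lambda$, as a polygon in the $(1/p,1/q)$-square whose critical corners are the meeting points of the scaling line with the lines $1/p=1-\lambda$ and $1/q=\lambda$, the remaining vertices lying on the convolution edges. The plan is to prove restricted weak-type estimates at each vertex---the convolution corners being immediate, the two critical corners being the sharp endpoint averaging estimates---and then to recover strong-type bounds throughout the open interior by Marcinkiewicz interpolation of the resulting type region, treating $\lambda$ via an analytic family when convenient. The \emph{strict} inequalities $1/q<\lambda$ and $1-\lambda<1/p$ are exactly the slack that lets interpolation reach every interior point even though the corresponding corner estimates may themselves fail.

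The \textbf{main obstacle} is the pair of critical corners, i.e.\ proving \eqref{eq:polyest} sharply up to the boundary of $\mathcal A(P)$. For quadratic $P$ this is within reach through the Stein--Wainger theory of discrete fractional integrals, the exponential-sum input being handled by a Hardy--Littlewood circle-method split into major and minor arcs with Gauss-sum estimates on the major arcs. For $\deg P\ge3$ the minor-arc contribution can be controlled by the Vinogradov Mean Value Theorem of Bourgain--Demeter--Guth, but the bounds so obtained stop just short of the scaling line; closing that final gap is precisely the step I expect to be genuinely hard, and it is what keeps the statement a conjecture rather than a theorem.
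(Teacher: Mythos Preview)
The statement you chose is labelled a \emph{Conjecture} in the paper and the paper makes no attempt to prove it.  It is quoted from Pierce~\cite{MR2820148}, then related to the averaging problem through Proposition~\ref{p:IA}; the paper's own contributions (Theorems~\ref{t:2}, \ref{thm:onedim}, \ref{thm:highdim} and Corollary~\ref{c:I}) verify only a strict subregion of the sufficiency direction for $d\ge 3$.  There is therefore no proof in the paper against which to compare your proposal.

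That said, what you wrote is a coherent programme rather than a proof, and you are honest about this in your last paragraph.  Your necessity arguments are correct and standard.  On the sufficiency side you have correctly located the obstruction: the two vertices where the scaling line $1-\lambda=d(1/p-1/q)$ meets the lines $1/q=\lambda$ and $1/p=1-\lambda$.  One caution: you describe those corner bounds as ``the sharp endpoint averaging estimates,'' but Proposition~\ref{p:IA} only gives one implication on the scaling line---part~(2) says the fractional estimate there implies the averaging estimate, while part~(1) needs strict inequality---so the corner fractional bound is \emph{at least as hard} as the endpoint of \eqref{eq:polyest}, not equivalent to it.  For $d=2$ the corners are handled by Stein--Wainger, as the paper notes; for $d\ge 3$ Theorem~\ref{thm:onedim} reaches only $p>2-2/(d^2+d+1)$, well short of the $p=2-1/d$ your scheme requires.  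Closing that gap is exactly the open problem, so your proposal ends where the paper does: with the conjecture still open.
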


Discrete fractional integrals were studied by Stein and Wainger \cites{MR1945293,MR1771530}, with further contributions by
Oberlin \cite{MR1825254}, and Ionescu and Wainger \cite{MR2188130}.  In particular, the case of quadratic $ P (x) = x ^2 $ is
completely resolved.
Our first main theorem provides a sharp, up to the endpoint, bound for most quadratic polynomials.
Note that, when studying \eqref{eq:polyest}, interpolation with the trivial estimates for $q=p$ allows us to additionally assume $q=p'$ and $p<2$.

\begin{theorem}\label{t:2}
For a quadratic polynomial $P(x)=ax^2+bx+c$ with non-negative integer coefficients and $ N\in \mathbb N $,
the inequality \eqref{eq:polyest} holds in the range
\begin{equation*}
\big\{ (p,q) \,:\, {\textstyle\frac{1}{q}\leq\frac{1}{p}},\ {\textstyle\frac{2}{q}>\frac{1}{p}},\ {\textstyle\frac{1}{q}>\frac{2}{p}-1} \big\}.
\end{equation*}
More precisely,
for every $3/2< p\leq 2$ and $N\in\mathbb N$ one has
\begin{equation}\label{e:abc}
\|A_{N}f\|_{\ell^{p'}(\mathbb{Z})} \lesssim_p \left(2a+\frac{b}{N}\right)(2aN+b)^{-2(\frac{1}{p}-\frac{1}{p'})}\|f\|_{\ell^p(\mathbb{Z})}.
\end{equation}
\end{theorem}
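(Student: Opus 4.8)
\emph{Proof strategy.} The plan is to derive Theorem~\ref{t:2} in full from the single inequality~\eqref{e:abc}, to read~\eqref{e:abc} as a critical discrete fractional integral estimate for the curve $k\mapsto P(k)$, and to reduce that estimate to the model polynomial $x^{2}$, where it is available through the work of Stein and Wainger together with the later refinements of Oberlin and of Ionescu and Wainger. To begin, the operator $A_{N}$ has norm at most $1$ on $\ell^{1}(\mathbb{Z})$ and on $\ell^{\infty}(\mathbb{Z})$, hence on every $\ell^{p}(\mathbb{Z})$; these trivial bounds correspond to $q=p$ in~\eqref{eq:polyest}, where the exponent $-d(\tfrac1p-\tfrac1q)$ vanishes. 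In the $(\tfrac1p,\tfrac1q)$-square the set $\{\tfrac1q\le\tfrac1p,\ \tfrac2q>\tfrac1p,\ \tfrac1q>\tfrac2p-1\}$ is the triangle with vertices $(0,0)$, $(1,1)$ and $(\tfrac23,\tfrac13)$, and each of its points is a convex combination of a point of the diagonal and a point of the segment $\{(\tfrac1p,\tfrac1{p'}):\tfrac32<p\le2\}$. Since $\tfrac1p-\tfrac1q$ is affine along the interpolation and the trivial bounds carry the factor $N^{0}$, the Riesz--Thorin theorem promotes~\eqref{e:abc} to~\eqref{eq:polyest} on the whole triangle with the asserted power of $N$; here one also uses $2a+\tfrac bN\le2a+b$ and $2aN+b\ge2aN$ to rewrite the constant in~\eqref{e:abc} as one of the form $C_{P,p}\,N^{-2(1/p-1/p')}$. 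It thus remains to prove~\eqref{e:abc}, where $p=2$ is the trivial $\ell^{2}$ bound; so assume $\tfrac32<p<2$ and set $\lambda_{p}:=1-2(\tfrac1p-\tfrac1{p'})=3-\tfrac4p\in(\tfrac13,1)$.

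Next I would pass from the average to a fractional integral. For $f\ge0$ and $1\le k\le N$ one has $k^{-\lambda_{p}}\ge N^{-\lambda_{p}}$, hence $A_{N}f\le N^{\lambda_{p}-1}I_{\lambda_{p}}f$ pointwise; this is the mechanism of Proposition~\ref{p:IA}(2), and it reduces the estimate on the line $q=p'$ to $(p,p',\lambda_{p})\in\mathcal{I}(P)$. The relation $1-\lambda_{p}=2(\tfrac1p-\tfrac1{p'})$ is an equality, while the remaining two conditions $\tfrac1{p'}<\lambda_{p}$ and $1-\lambda_{p}<\tfrac1p$ of Conjecture~\ref{j:I} both reduce to $p>\tfrac32$, so the triple $(p,p',\lambda_{p})$ lies exactly on the critical line. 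To produce the refined constant in~\eqref{e:abc}, rather than only the uniform bound~\eqref{eq:polyest}, I would carry out the comparison with $I_{\lambda_{p}}$ at the scale $2aN+b=P'(N)$ of the largest gap $P(k+1)-P(k)$ over $1\le k<N$, tracking the dependence on $a$, $b$, $N$ explicitly; this is the source of the factors $(2aN+b)^{-2(1/p-1/p')}$ and $2a+\tfrac bN=(2aN+b)/N$.

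Then I would reduce the fractional integral estimate for a general quadratic to the case $x^{2}$. Translating the argument of $f$ removes $c$. Multiplying $P$ by a fixed positive integer $m$ affects neither $\mathcal{I}(P)$ nor the relevant operator norm: splitting the target variable into residue classes modulo $m$ writes $I_{\lambda_{p}}^{mP}$ as $m$ parallel copies of $I_{\lambda_{p}}^{P}$, and $\|\cdot\|_{\ell^{p'}}\le\|\cdot\|_{\ell^{p}}$ on the finite index set recombines them, since $p\le p'$. Taking $m=4a$ and completing the square, $4aP(x)=(2ax+b)^{2}-(b^{2}-4ac)$, identifies $I_{\lambda_{p}}^{P}$, up to a translation of $f$, with $\sum_{n\equiv b\,(2a)}\big(\tfrac{n-b}{2a}\big)^{-\lambda_{p}}f(\,\cdot\,+n^{2})$; comparing $(n-b)^{-\lambda_{p}}$ with $n^{-\lambda_{p}}$ for large $n$, disposing of the finitely many remaining $n$ by the triangle inequality, and dominating the single residue class by the full sum over $n$ (legitimate because the estimate may be tested on $f\ge0$), one bounds this by a constant multiple of $I_{\lambda_{p}}^{x^{2}}$, which is bounded $\ell^{p}\to\ell^{p'}$ for every $p\in(\tfrac32,2)$. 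Alternatively, one may run the circle-method analysis directly for $ax^{2}+bx+c$, which is a routine generalization of the $x^{2}$ case and keeps the constants visible.

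The step I expect to be the main obstacle is precisely the discrete fractional integral estimate for $x^{2}$ on the open portion of the critical line $1-\lambda=d(\tfrac1p-\tfrac1q)$: establishing it requires the circle-method analysis of $\sum_{k}k^{-\lambda}e^{2\pi i k^{2}\theta}$ — Weyl and Gauss sum bounds on the minor arcs together with a delicate summation of the major-arc contributions — and it is exactly this analysis that degenerates as $p\downarrow\tfrac32$, which is why the endpoint $p=\tfrac32$ must be excluded. A secondary, more technical, difficulty is organizing the rescaling and the completion of the square so that the surviving powers of $a$, $b$ and $N$ assemble into $(2a+\tfrac bN)(2aN+b)^{-2(1/p-1/p')}$ and nothing larger, which is what forces every rescaling to be performed at the coarsest spacing $2aN+b$ of the sampled points.
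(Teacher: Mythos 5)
Your proposal is correct and follows essentially the same route as the paper: interpolate with the trivial $q=p$ bounds, obtain the $x^2$ case from the Stein--Wainger fractional integral theorem on the critical line via the pointwise domination $A_Nf\le N^{\lambda-1}I_\lambda f$, and reduce a general quadratic to $x^2$ by passing to the lattice $4a\mathbb{Z}$ and completing the square $4aP(x)=(2ax+b)^2-(b^2-4ac)$. The only organizational difference is that the paper performs this last reduction directly at the level of the averages, bounding $A_N^Pf$ pointwise by $\left(2a+\frac{b}{N}\right)A^{x^2}_{2aN+b}g$ for the lifted function $g$, so that the explicit constant in \eqref{e:abc} falls out immediately, whereas your reduction at the level of $I_\lambda$ needs the additional weight comparison $(n-b)^{-\lambda}\lesssim n^{-\lambda}$ and recovers that constant only through the rescaling at scale $2aN+b$ that you sketch.
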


It seems reasonable to conjecture that in \eqref{eq:polyest} the bounds, provided they hold, depend upon the polynomial only through its degree $ d$.  Right now, we do not know that this is true even in the quadratic case.

Pierce \cites{MR2820148,MR2872554} also studied the fractional integrals. In particular \cite{MR2820148} points to the relationship
to the (at that point unresolved) Vinogradov Mean Value Conjecture.
It reveals itself through the need for bounds on the
exponential sums
\begin{equation}\label{eq:expsums}
S_N(t_1,t_2,\ldots,t_d) := \frac{1}{N}\sum_{k=1}^{N} e^{2\pi i (k t_1 + k^2 t_2 + \cdots + k^d t_d)}; \quad t_1,t_2,\ldots,t_d\in\mathbb{T}=\mathbb{R}/\mathbb{Z}.
\end{equation}
This theme was further elaborated on by Kim \cite{MR3350107}.
Using the work of Bourgain, Guth, and Demeter \cite{MR3548534}, we establish

\begin{theorem}\label{thm:onedim}
Let $P$ be an arbitrary polynomial of degree $d\geq 3$ mapping the set of integers back into itself.
Averages \eqref{eq:polyaver} satisfy estimate \eqref{eq:polyest} for exponents $p,q$ in the triangular range
\begin{equation}\label{eq:polyrange}
\big\{ (p,q) \,:\, {\textstyle\frac{1}{q}\leq\frac{1}{p}},\ {\textstyle\frac{d^2+d+1}{q}>\frac{d^2+d-1}{p}},\ {\textstyle\frac{d^2+d-1}{q}>\frac{d^2+d+1}{p}-2} \big\}.
\end{equation}
Specializing $q=p'$ the range \eqref{eq:polyrange} reduces to
\begin{equation}\label{eq:polyrangespecial}
2-{\textstyle\frac{2}{d^2+d+1}}<p\leq2.
\end{equation}
\end{theorem}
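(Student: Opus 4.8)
The plan is to reduce the whole theorem to a single sharp $L^{r}$-bound for the Fourier multiplier of $A_N$ and then feed in the Vinogradov Mean Value Theorem. Write $P(k)=c_0+c_1k+\dots+c_dk^d$; after replacing $P$ by $d!\,P$ (harmless, since this only rescales the implicit constants) we may assume $c_1,\dots,c_d\in\mathbb Z$ with $c_d\neq0$, and we may discard the finitely many $k$ on which $P$ fails to be monotone, as those contribute $O(1/N)$ to $A_N$. On $\mathbb T$ the operator $A_N$ acts by the multiplier $m_N(\theta)=N^{-1}\sum_{k=1}^{N}e^{2\pi i P(k)\theta}$. The first, soft, observation is that the three lines bounding the region \eqref{eq:polyrange} pass through the three points $(1/p,1/q)\in\{(0,0),(1,1),(\tfrac{d^2+d+1}{2d(d+1)},\tfrac{d^2+d-1}{2d(d+1)})\}$; the first two are the trivial $\ell^\infty\!\to\!\ell^\infty$ and $\ell^1\!\to\!\ell^1$ endpoints (exponent $N^0$), and the third lies on $q=p'$ with $p=2-\tfrac{2}{d^2+d+1}$. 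Since the target exponent $-d(1/p-1/q)$ is affine in $(1/p,1/q)$, it suffices to prove, for every $p\in(2-\tfrac{2}{d^2+d+1},2]$, the estimate $\|A_Nf\|_{\ell^{p'}(\mathbb Z)}\lesssim_{p}N^{-d(\frac2p-1)}\|f\|_{\ell^{p}(\mathbb Z)}$; interpolating this with the two trivial endpoints (and letting the third endpoint run over the open segment) then fills out \eqref{eq:polyrange}, the region being in any case symmetric across $1/p+1/q=1$ under the duality $A_N\leftrightarrow A_N^{*}$.

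For $1\le p\le2$, Hausdorff--Young on $\mathbb Z$ and on $\mathbb T$ followed by Hölder's inequality give the clean bound $\|A_N\|_{\ell^{p}\to\ell^{p'}}\le\|m_N\|_{L^{r}(\mathbb T)}$ with $r:=p/(2-p)$, and since $-d/r=-d(\tfrac2p-1)$ the desired estimate reduces to
\begin{equation*}
\int_0^1|m_N(\theta)|^{r}\,d\theta\ \lesssim_{P,r}\ N^{-d}\qquad\text{for every }r>d^{2}+d=d(d+1).
\end{equation*}
I would establish this by the circle method. Split $[0,1)$ into major arcs $\mathfrak M$ (intervals of length $\sim N^{-d}/q$ about $a/q$ with $q\le N^{\delta}$) and minor arcs $\mathfrak m$. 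On $\mathfrak M$ use the usual approximation $m_N(a/q+\beta)\approx q^{-1}S(a,q)V(\beta)$, the complete-sum bound $|S(a,q)|\lesssim_{P,\epsilon}q^{1-1/d+\epsilon}$, and the oscillatory-integral bound $|V(\beta)|\lesssim N\min\{1,(N^{d}|\beta|)^{-1/d}\}$; a short computation gives $\int_{\mathfrak M}|m_N|^{r}\lesssim N^{-d}\sum_{q\ge1}q^{1-r/d+\epsilon}$, convergent because $r>2d$, so this part is $\epsilon$-free. On $\mathfrak m$ use the elementary Weyl bound $\sup_{\mathfrak m}|m_N|\lesssim N^{-\rho}$ for some $\rho=\rho(d,\delta)>0$ together with the $L^{d(d+1)}$ estimate below: writing $\int_{\mathfrak m}|m_N|^{r}\le(\sup_{\mathfrak m}|m_N|)^{r-d(d+1)}\int_0^1|m_N|^{d(d+1)}$ yields $\int_{\mathfrak m}|m_N|^{r}\lesssim_{\epsilon}N^{-d-\rho(r-d(d+1))+\epsilon}\le N^{-d}$ once $\epsilon<\rho\,(r-d(d+1))$ — the strict inequality $r>d(d+1)$ is precisely what absorbs the unavoidable $\epsilon$-loss coming from Vinogradov's theorem.

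The remaining ingredient, which I expect to be the heart of the matter, is
\begin{equation*}
\int_0^1|m_N(\theta)|^{d(d+1)}\,d\theta\ \lesssim_{P,\epsilon}\ N^{-d+\epsilon},
\end{equation*}
and this is where the Vinogradov Mean Value Theorem in the sharp form of Bourgain, Demeter and Guth \cite{MR3548534} enters. By orthogonality the left side equals $N^{-d(d+1)}$ times the number of $\vec k\in\{1,\dots,N\}^{d(d+1)}$ satisfying the single relation $\sum_{j=1}^{d}c_j\big(\sum_{i\le s}k_i^{\,j}-\sum_{i>s}k_i^{\,j}\big)=0$ with $2s=d(d+1)$. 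Sorting solutions by the value $\vec h=(h_1,\dots,h_d)$ of the vector of differences of power sums, one notes that for each fixed $\vec h$ the count $\#\{\vec k:\sum_{i\le s}k_i^{\,j}-\sum_{i>s}k_i^{\,j}=h_j\ (1\le j\le d)\}$ is a nonnegative Fourier coefficient of $\big|\sum_{k\le N}e^{2\pi i(kt_1+\dots+k^dt_d)}\big|^{2s}$, hence at most its zeroth coefficient $J_{s,d}(N)$, the Vinogradov integral; moreover the admissible $\vec h$ obey $|h_j|\lesssim N^{j}$ together with the one linear constraint $\sum_jc_jh_j=0$, so $h_d$ is determined by $h_1,\dots,h_{d-1}$ and there are $\lesssim_{P}N^{1+2+\dots+(d-1)}=N^{d(d-1)/2}$ of them. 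The main conjecture in Vinogradov's mean value theorem, now a theorem, gives $J_{s,d}(N)\lesssim_{\epsilon}N^{2s-d(d+1)/2+\epsilon}=N^{d(d+1)/2+\epsilon}$ at the critical exponent $2s=d(d+1)$, whence the solution count is $\lesssim_{P,\epsilon}N^{d(d-1)/2}\cdot N^{d(d+1)/2+\epsilon}=N^{d^{2}+\epsilon}$ and the displayed bound follows. Everything else — the circle-method bookkeeping with its error terms, the Weyl and complete-sum estimates, the reductions made at the start, and the concluding interpolation — is routine by comparison; the one genuinely hard input is the sharp Vinogradov estimate, without which (e.g. with only classical Weyl- or Hua-type bounds, which for general polynomials do not suffice at this exponent) one would be forced to a strictly smaller range of $p$.
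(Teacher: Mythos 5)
Your proposal is correct in all essential steps, but it follows a genuinely different route from the paper. The paper never touches the one\-/dimensional multiplier $m_N(\theta)=N^{-1}\sum_{k\le N}e^{2\pi i P(k)\theta}$ directly: it first proves the $d$-dimensional Theorem~\ref{thm:highdim} by applying Hausdorff--Young on $\mathbb{T}^d$ and $\mathbb{Z}^d$ to reduce to $\|S_N\|_{\textup{L}^s(\mathbb{T}^d)}$ for $s>d(d+1)$, quoting the $\varepsilon$-free form \eqref{eq:vinogradov} of the Vinogradov bound (with the $\varepsilon$-removal outsourced to Wooley and to \cite{MR3548534}*{Sec.~5}), and then deduces Theorem~\ref{thm:onedim} by a physical-space transference: testing $\widetilde{A}_N$ on functions of the form $g(a_1x_1+\cdots+a_dx_d+r)$ localized to a box, so that the $d$-dimensional improving estimate projects onto the one-dimensional one. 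Your argument is the frequency-side counterpart of that projection: the fibering of the solution count over the vector $\vec h$ of power-sum differences, with the positivity-of-Fourier-coefficients trick and the count $\lesssim N^{d(d-1)/2}$ of admissible $\vec h$, is exactly the classical Hua-type reduction of a single-polynomial mean value to the Vinogradov integral (this is the mechanism in the Pierce and Kim papers the authors cite), and it yields the same numerology $N^{d^2+\varepsilon}$ at the critical exponent $2s=d(d+1)$. The trade-off is that your route must carry out a one-dimensional circle method (major-arc asymptotics with the complete-sum and van der Corput bounds, Weyl's inequality on the minor arcs) to remove the $\varepsilon$ --- this is standard but is real work that you defer as ``routine,'' whereas the paper sidesteps it by citing the known $\varepsilon$-free $d$-dimensional estimate; conversely, the paper's route produces Theorem~\ref{thm:highdim} as a byproduct, which is a separate result there, while yours does not. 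Your preliminary reductions (rescaling $P$ by $d!$, restricting to $q=p'$, and the interpolation with the trivial diagonal estimates that recovers the full triangle \eqref{eq:polyrange}) all check out and coincide with what the paper does at the corresponding stage.
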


We will regard averages \eqref{eq:polyaver} as ``projections'' of the following higher-dimensional polynomial averages.
Writing $ (x_1,x_2,\ldots,x_d)\in\mathbb{Z}^d$, consider
\begin{equation}\label{eq:highaver}
\widetilde{A}_N f(x_1,x_2,\ldots,x_d) := \frac{1}{N}\sum_{k=1}^{N} f(x_1+k,x_2+k^2,\ldots,x_d+k^d) .
\end{equation}
This time we want to prove $\ell^p$-improving estimates of the form
\begin{equation}\label{eq:highest}
\|\widetilde{A}_N f\|_{\ell^q(\mathbb{Z}^d)} \lesssim_{d,p,q} N^{-\frac{d(d+1)}{2}(\frac{1}{p}-\frac{1}{q})} \|f\|_{\ell^p(\mathbb{Z}^d)}.
\end{equation}
The exponent $-\frac{d(d+1)}{2}(\frac{1}{p}-\frac{1}{q})$ is the most improvement in $N$ that one can expect, as in easily seen by taking $f$ to be the indicator function of
$\{1,2,\ldots,2N\}\times\cdots\times\{1,2,\ldots,2N^d\}$.

\begin{theorem}\label{thm:highdim}
If $d\geq 3$, then averages \eqref{eq:highaver} satisfy estimate \eqref{eq:highest} for exponents $p,q$ in the same range \eqref{eq:polyrange}, as in Theorem~\ref{thm:onedim}.
\end{theorem}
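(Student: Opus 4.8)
The plan is to pass to the Fourier side on $\mathbb{Z}^d$, where $\widetilde{A}_N$ acts as multiplication by the exponential sum $S_N$ of \eqref{eq:expsums}, and then feed the Vinogradov Mean Value Theorem of Bourgain, Demeter, and Guth \cite{MR3548534} into an elementary Hausdorff--Young and Hölder estimate. The first step is to dispose of the trivial endpoints and cut down the exponent range. Since $\widetilde{A}_N f$ has Fourier transform $S_N\cdot\widehat{f}$ on $\mathbb{T}^d$, one has $|\widehat{\widetilde{A}_N f}| = |S_N|\,|\widehat{f}| \le |\widehat{f}|$ pointwise. Being an average of $N$ translations, $\widetilde{A}_N$ maps $\ell^1(\mathbb{Z}^d)\to\ell^1(\mathbb{Z}^d)$ and $\ell^\infty(\mathbb{Z}^d)\to\ell^\infty(\mathbb{Z}^d)$ with norm $1$, and, because $\|S_N\|_{L^\infty(\mathbb{T}^d)}=1$, also $\ell^2\to\ell^2$ with norm $1$; this establishes \eqref{eq:highest} with the trivial factor $N^0$ along the diagonal $p=q$ of the range \eqref{eq:polyrange}. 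The adjoint $\widetilde{A}_N^{\ast}$ is the analogous average over the reflected curve $k\mapsto-(k,k^2,\dots,k^d)$, whose associated exponential sum has the same modulus, so the set of admissible pairs $(p,q)$ is symmetric under the duality $(p,q)\mapsto(q',p')$. Viewed in the $(1/p,1/q)$-plane, \eqref{eq:polyrange} is the convex hull of its diagonal segment and its intersection with the line $q=p'$, so by Riesz--Thorin interpolation (using the diagonal bound just established) it suffices --- exactly as for \eqref{eq:polyest} --- to prove \eqref{eq:highest} when $q=p'$, that is,
\[
\|\widetilde{A}_N f\|_{\ell^{p'}(\mathbb{Z}^d)} \lesssim_{d,p} N^{-\frac{d(d+1)}{2}\left(\frac1p-\frac1{p'}\right)} \|f\|_{\ell^p(\mathbb{Z}^d)}, \qquad 2-\tfrac{2}{d^2+d+1}<p\le2 .
\]

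So fix $p$ with $2-\tfrac{2}{d^2+d+1}<p<2$ (the case $p=2$ being the trivial $\ell^2$ bound). Then $p\le2\le p'$, and applying the Hausdorff--Young inequality twice together with Hölder's inequality gives
\[
\|\widetilde{A}_N f\|_{\ell^{p'}(\mathbb{Z}^d)} \le \|S_N\,\widehat{f}\|_{L^p(\mathbb{T}^d)} \le \|S_N\|_{L^r(\mathbb{T}^d)}\,\|\widehat{f}\|_{L^{p'}(\mathbb{T}^d)} \le \|S_N\|_{L^r(\mathbb{T}^d)}\,\|f\|_{\ell^p(\mathbb{Z}^d)},
\]
where $\frac1r=\frac1p-\frac1{p'}=\frac2p-1$; the assumption on $p$ is precisely the statement that $r>d(d+1)$. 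Hence the theorem is reduced to the single inequality $\|S_N\|_{L^r(\mathbb{T}^d)}\lesssim_r N^{-\frac{d(d+1)}{2r}}$ for $r>d(d+1)$, equivalently, writing $r=2s$ with $s>\tfrac12 d(d+1)$, to
\[
\int_{\mathbb{T}^d}\Big|\sum_{k=1}^{N} e^{2\pi i(kt_1+k^2t_2+\cdots+k^dt_d)}\Big|^{2s}\,dt_1\cdots dt_d \;\lesssim_{s}\; N^{2s-\frac{d(d+1)}{2}} .
\]
This last quantity is the Vinogradov mean value of degree $d$ and order $s$, which by the resolution of the Main Conjecture by Bourgain, Demeter, and Guth \cite{MR3548534} is $\lesssim_{s,\varepsilon} N^{2s-d(d+1)/2+\varepsilon}$ for every $\varepsilon>0$. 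Since $s$ lies strictly above the critical exponent $\tfrac12 d(d+1)$, the residual $N^\varepsilon$ is removed by the classical circle method: on splitting $\mathbb{T}^d$ into major and minor arcs, Weyl's inequality yields a genuine power gain on the minor arcs that absorbs the $N^\varepsilon$, while the major arcs contribute only $O_s\big(N^{2s-d(d+1)/2}\big)$. Stringing the displays together proves \eqref{eq:highest} for $(p,q)$ in \eqref{eq:polyrange}.

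I expect the $\varepsilon$-removal just invoked to be the only genuine difficulty. The decoupling input is sharp only up to the factor $N^\varepsilon$, and simply inserting the $\varepsilon$-lossy bound would propagate that loss throughout \eqref{eq:polyrange}; one must therefore work strictly inside the open range --- its slice $q=p'$ being precisely the strictly supercritical window $r>d(d+1)$ --- and then carry out the major/minor-arc analysis near the critical exponent to kill the $N^\varepsilon$. This is also the structural reason the admissible set is an \emph{open} triangle: the vertex corresponding to $r=d(d+1)$, where only the $\varepsilon$-critical Vinogradov estimate is available, lies on its boundary and is excluded. A secondary, purely organizational point is that the Hausdorff--Young step requires $p\le2\le q$, so the portion of \eqref{eq:polyrange} near the vertex $(p,q)=(1,1)$, where $q<2$, is obtained not directly but through the interpolation carried out in the first paragraph.
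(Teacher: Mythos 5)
Your proposal is correct and follows essentially the same route as the paper: pass to the Fourier side, apply Hausdorff--Young, H\"{o}lder, Hausdorff--Young to reduce the $q=p'$ case to the bound $\|S_N\|_{L^r(\mathbb{T}^d)}\lesssim N^{-d(d+1)/(2r)}$ for $r>d(d+1)$, invoke the Bourgain--Demeter--Guth resolution of Vinogradov's mean value theorem with the $\varepsilon$-removal (which the paper handles by citing Wooley and \cite{MR3548534}*{Sec.~5} rather than sketching the major/minor arc argument), and interpolate with the trivial diagonal bounds to fill out the range \eqref{eq:polyrange}.
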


As we explain in \S\ref{s:examples}, the bounds above are very close to optimal. It would be interesting to find the optimal open ranges of exponents $p$ and $q$ in Theorems~\ref{thm:onedim} and \ref{thm:highdim}.

Let us also remark that continuous-parameter results similar to Theorem~\ref{thm:highdim} have already been present in the literature for a while. For example, Christ \cite{MR1654767} has essentially settled all $\textup{L}^p$-improving properties of the convolution operator associated with the continuous moment curve $[-1,1]\to\mathbb{R}^d$, $t\mapsto(t,t^2,\ldots,t^d)$. Proofs of such results do not rely on number theory and sometimes not even on the Fourier analysis (as was the case with \cite{MR1654767}), but rather on geometrical considerations and enumerative combinatorics.

Our proof of Theorem~\ref{t:2} is done by using the essentially sharp results about discrete fractional integrals due to Stein and Wainger.
The argument can be reversed. We use our higher degree results on $\ell^{p}$-improving to deduce results about discrete fractional integrals.
To indicate the range of results that can be proved, we define
\begin{equation}\label{e:Id}
\widetilde{I} _{d, \lambda } f (x_1 ,\dotsc, x_d) := \sum_{k=1 } ^{\infty } k ^{- \lambda } f (x_1+k ,\dotsc, x_d+k^d),  \qquad  0 < \lambda <1
\end{equation}
for finitely supported  functions $f $.  The following corollary extends works by Kim \cite{MR3350107}*{Cor. 2.2, 2.3}, and is very nearly sharp.

\begin{corollary}\label{c:I}
Suppose that the indices $ (p,q)$ belong to the range \eqref{eq:polyrange}.
\begin{enumerate}\setlength\itemsep{.75em}
\item Let $P$ be an arbitrary polynomial of degree $d\geq 3$ mapping the set of integers back into itself. We have the inequality \eqref{e:Iest} provided that $ 0 < 1- \lambda < d (1/p - 1/q) $.
\item Take $d\geq 3$. We have the inequality $ \lVert \widetilde{I} _{d, \lambda } f\rVert _{\ell^{q}(\mathbb{Z}^d)} \lesssim _{d, \lambda, p, q } \lVert f\rVert_{\ell^{p}(\mathbb{Z}^d)} $ provided that $ 0 < 1- \lambda < (1/2)d(d+1) (1/p - 1/q) $.
\end{enumerate}
\end{corollary}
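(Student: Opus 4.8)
The plan is to derive both statements from the $\ell^p$-improving bounds of Theorems \ref{thm:onedim} and \ref{thm:highdim} by a standard layer-cake decomposition of the kernel $k^{-\lambda}$ into dyadic pieces on which it is essentially constant, then summing the resulting geometric series. Concretely, for part (1) I would write $\widetilde{I}_{d,\lambda}$ — or rather $I_\lambda$ in the one-dimensional formulation of \eqref{e:I} — as $I_\lambda f = \sum_{j\geq 0} 2^{-j\lambda} B_j f$, where $B_j f(x) := \sum_{2^j \leq k < 2^{j+1}} f(x+P(k))$ is a truncated (non-normalized) sum. Up to a constant factor, $B_j f = 2^j A_{2^{j+1}} f - 2^{j-1} A_{2^{j}} f \cdot(\text{shift})$, so $\|B_j f\|_{\ell^q} \lesssim 2^j \|A_{2^j} f\|_{\ell^q}$ (being a bit careful that $A_N$ averages over $k\in\{1,\dots,N\}$ rather than a dyadic block, but the block estimate follows by taking a difference of two such averages, each controlled by Theorem \ref{thm:onedim}). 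Invoking \eqref{eq:polyest} with $N = 2^j$ gives $\|B_j f\|_{\ell^q} \lesssim 2^j \cdot 2^{-jd(1/p-1/q)} \|f\|_{\ell^p} = 2^{j(1 - d(1/p-1/q))}\|f\|_{\ell^p}$. Hence
\begin{equation*}
\|I_\lambda f\|_{\ell^q} \leq \sum_{j\geq 0} 2^{-j\lambda}\|B_j f\|_{\ell^q} \lesssim \Big(\sum_{j\geq 0} 2^{-j\lambda} 2^{j(1-d(1/p-1/q))}\Big)\|f\|_{\ell^p} = \Big(\sum_{j\geq 0} 2^{-j(\lambda - 1 + d(1/p-1/q))}\Big)\|f\|_{\ell^p},
\end{equation*}
and the geometric series converges precisely when $\lambda - 1 + d(1/p - 1/q) > 0$, i.e. $1 - \lambda < d(1/p - 1/q)$, which is the stated hypothesis. (This is exactly the mechanism already recorded in Proposition \ref{p:IA}(1), applied with $(p,q) \in \mathcal{A}(P)$ supplied by Theorem \ref{thm:onedim}; I would simply cite that proposition and Theorem \ref{thm:onedim}.)

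Part (2) is identical in structure, but now the building blocks are the higher-dimensional averages $\widetilde{A}_N$ of \eqref{eq:highaver} and the relevant improving exponent is $\frac{d(d+1)}{2}(1/p-1/q)$ rather than $d(1/p-1/q)$. Decompose $\widetilde{I}_{d,\lambda} f = \sum_{j\geq 0} 2^{-j\lambda}\widetilde{B}_j f$ with $\widetilde{B}_j f(x) := \sum_{2^j\leq k < 2^{j+1}} f(x_1+k,\dots,x_d+k^d)$; bound $\|\widetilde{B}_j f\|_{\ell^q(\mathbb{Z}^d)} \lesssim 2^j \|\widetilde{A}_{2^j} f\|_{\ell^q(\mathbb{Z}^d)}$ as a difference of two normalized averages, apply Theorem \ref{thm:highdim} to get $\|\widetilde{A}_{2^j} f\|_{\ell^q} \lesssim 2^{-j\frac{d(d+1)}{2}(1/p-1/q)}\|f\|_{\ell^p}$, and sum the geometric series, which converges exactly when $1-\lambda < \frac{d(d+1)}{2}(1/p-1/q)$. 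Since the range \eqref{eq:polyrange} for $(p,q)$ is precisely where Theorem \ref{thm:highdim} supplies the needed bound, the conclusion follows. I would also note that this argument only uses $(p,q)$ in the open range \eqref{eq:polyrange} together with the strict inequality on $\lambda$, which is why the corollary is stated as ``very nearly sharp'' rather than sharp — the endpoint cases $1-\lambda = d(1/p-1/q)$ (resp. $=\frac{d(d+1)}{2}(1/p-1/q)$) would require the endpoint/restricted-type versions of Theorems \ref{thm:onedim} and \ref{thm:highdim} that we do not prove.

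The only genuinely non-routine point is the reduction of the dyadic block sum $\widetilde{B}_j$ (a sum over $k$ in a dyadic annulus) to the normalized averages $\widetilde{A}_N$ (sums over initial segments $\{1,\dots,N\}$). I expect this to be the main — though still minor — obstacle: one writes $\sum_{2^j \leq k < 2^{j+1}} f(\dots + k^i \dots) = 2^{j+1}\widetilde{A}_{2^{j+1}}f(\dots) - 2^{j}\widetilde{A}_{2^{j}}f(\dots)$, and since $\widetilde{A}_{2^{j+1}}$ and $\widetilde{A}_{2^j}$ are both governed by Theorem \ref{thm:highdim} with comparable constants (the implied constant depends only on $d,p,q$, not on $N$), the triangle inequality closes the estimate. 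One must just make sure the shifts in the arguments $x_i \mapsto x_i + (\text{const})$ cause no loss, which they do not since $\ell^p$ norms are translation invariant. Everything else is the geometric summation displayed above.
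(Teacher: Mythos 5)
Your proposal is correct and follows essentially the same route as the paper: the paper proves part (1) by citing Proposition~\ref{p:IA}(1) together with Theorem~\ref{thm:onedim}, and part (2) by the identical dyadic decomposition $\lVert \widetilde{I}_{d,\lambda} f\rVert_{\ell^q} \lesssim_\lambda \sum_j 2^{(1-\lambda)j}\lVert \widetilde{A}_{2^j} f\rVert_{\ell^q}$ followed by Theorem~\ref{thm:highdim} and summation of the geometric series. The only cosmetic difference is that the paper controls each dyadic block by positivity (bounding the sum over $2^j\leq k<2^{j+1}$ by the full sum up to $2^{j+1}$) rather than by your difference-of-averages identity, which makes the point you flag as ``non-routine'' entirely trivial.
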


\smallskip
The study of improving estimates for averages has been studied for decades in the Euclidean setting.  It was recently recognized that
some of these inequalities can be further extended to so-called \emph{sparse bounds}.  The latter imply the strongest known weighted
estimates; see \cites{2017arXiv170208594L,2017arXiv170407810C,2019arXiv190600329H}.
The study of the discrete variants has a much shorter history.  Qualitative results were established in \cites{MR3892403,MR3933540}
for discrete singular  Radon transforms.
On the other hand, discrete spherical averages admit a robust variant of their continuous analogs
\cites{180409260H,180509925,180906468,181002240,180409845}.

Improving inequalities and sparse bounds are closely related, but the connection is far more delicate in the discrete case.
In particular, the sparse bounds  proved in \cites{2019arXiv190902883H,2019arXiv190705734H} for averages along the prime and square integers, respectively,  rely upon the Hardy--Littlewood Circle method. We do not know another way to prove those bounds; see the conjectures in \S\ref{s:sparse}.

\section{Relating averages to fractional integrals and vice versa}
\begin{proof}[Proof of Proposition~\ref{p:IA}] Assume that $ (p,q) \in \mathcal A (P)$, that is \eqref{eq:polyest} holds.
For $ 0<  1-\lambda  < \min  \{1,  d (1/p - 1/q) \} $ we have
\begin{align*}
\lVert I _{\lambda } f  \rVert _{\ell^q(\mathbb{Z})} & \lesssim_{\lambda} \sum_{j =1} ^{\infty }   2 ^{(1- \lambda ) j  }\lVert  A _{2 ^{j}} f  \rVert_{\ell^q(\mathbb{Z})}
\\
& \lesssim_{P,p,q} \lVert f\rVert_{\ell^p(\mathbb{Z})}   \sum_{j =1} ^{\infty }   2 ^{ ( 1- \lambda - d (1/p - 1/q) )j }  \lesssim_{\lambda} \lVert f\rVert_{\ell^p(\mathbb{Z})}.
\end{align*}

In the reverse direction,  observe that
$A_N f   \leq  N ^{-1+ \lambda } I _{\lambda } f$.
So if $ (p,q, 1 - d (1/p - 1/q) ) \in \mathcal I (P)$, the result follows.
\end{proof}

\begin{proof}[Proof of Corollary~\ref{c:I} assuming Theorems~\ref{thm:onedim} and \ref{thm:highdim}]
The first part is a direct consequence of Theorem~\ref{thm:onedim} and Proposition~\ref{p:IA}.
For the second part we again write
\begin{equation*}
\lVert \widetilde{I} _{d,\lambda } f  \rVert _{\ell^q(\mathbb{Z}^d)} \lesssim_{\lambda} \sum_{j =1}^{\infty }   2 ^{(1- \lambda ) j  }\lVert  \widetilde{A} _{2 ^{j}} f  \rVert_{\ell^q(\mathbb{Z}^d)}
\end{equation*}
and then apply Theorem~\ref{thm:highdim} in exactly the same way.
\end{proof}

\section{Quadratic polynomials}

\subsection*{Case $d=2$, $P(x)=x^2$}
For this particular choice of the polynomial, Conjecture~\ref{j:I} has been verified by Stein and Wainger  \cites{MR1771530,MR1945293}; also see \cite{MR2188130}*{Cor 1.3}.
Note that for $ 3/2 < p < 2$, $ q=p'$, and $ 1- \lambda = 2 (1/p-1/q)$, we have $ (p, q, \lambda ) \in \mathcal I (x^2)$,
so the result follows.

\subsection*{General quadratic polynomials}
Let us turn to the proof of \eqref{e:abc}.
In order to avoid confusion we will write the relevant polynomial $P$ in the superscript of $A_N^P$.

We define $g\colon\mathbb Z\to\mathbb{R}$ by
$$
g(4am)=f(m) \ \text{for all} \ m\in\mathbb Z \ \text{and} \ g(n)=0 \ \text{if}\ 4a\nmid n.
$$
Since $f$ is supported in $[-(aN^2+bN+\frac{b^2}{4a}),aN^2+bN+\frac{b^2}{4a}]$, we have that $g$ is supported in $[-(2aN+b)^2,(2aN+b)^2]$. Therefore
\begin{eqnarray*}
A_{N}^{P}f(x)&=&\frac{1}{N}\sum_{n\leq N}f(x+an^2+bn+c)\\
&=&\frac{1}{N}\sum_{n\leq N}g(4ax+4a^2n^2+4abn+4ac)\\
&=&\frac{1}{N}\sum_{n\leq N}g(4a(x+c)-b^2+(2an+b)^2)\\
&\leq&\frac{(2aN+b)}{N}\frac{1}{2aN+b}\sum_{k\leq 2aN+b}g(4a(x+c)-b^2+k^2)\\
&=&\left(2a+\frac{b}{N}\right)A_{2aN+b}^{x^2}g(4a(x+c)-b^2).
\end{eqnarray*}
Using this calculation and the previously established case of Theorem \ref{t:2} we obtain
\begin{eqnarray*}
\|A_{N}^{P}f\|_{\ell^{p'}(\mathbb{Z})}&\leq&\left(2a+\frac{b}{N}\right)\|A_{2aN+b}^{x^2}g\|_{\ell^{p'}(\mathbb{Z})}\\
&\leq& \left(2a+\frac{b}{N}\right)(2aN+b)^{2/p'-2/p}C_p\|g\|_{\ell^{p}(\mathbb{Z})}\\
&\leq& \left(2a+\frac{b}{N}\right)(2aN+b)^{2/p'-2/p}C_p\|f\|_{\ell^{p}(\mathbb{Z})},
\end{eqnarray*}
for every $3/2<p\leq2$.

\section{Reduction to Vinogradov's mean value theorem}
\begin{proof}[Proof of Theorem~\ref{thm:highdim}]
Let $\ell^2(\mathbb{Z}^d)\to\textup{L}^2(\mathbb{T}^d)$, $f\mapsto\widehat{f}$ denote the Fourier transform on the group $\mathbb{Z}^d$. For finitely supported $f$ this is simply the formation of the multiple Fourier series with coefficients $f(m)$; $m\in\mathbb{Z}^d$.

We  apply the Hausdorff--Young inequality twice to reduce the problem to bounds for the exponential sums. To do so, we write
\begin{align*}
& \widetilde{A}_N f(x_1,x_2,\ldots,x_d) \\
& = \frac{1}{N}\sum_{k=1}^{N} \int_{\mathbb{T}^d} \widehat{f}(t_1,t_2,\ldots,t_d) e^{2\pi i ( (x_1+k)t_1 + (x_2+k^2)t_2 + \cdots + (x_d+k^d) t_d) } dt_1 dt_2 \cdots dt_d \\
& = \int_{\mathbb{T}^d} \widehat{f}(t_1,t_2,\ldots,t_d) S_N(t_1,t_2,\ldots,t_d) e^{2\pi i ( x_1 t_1 + x_2 t_2 + \cdots + x_d t_d)} dt_1 dt_2 \cdots dt_d,
\end{align*}
where $S_N$ are the normalized exponential sums given by \eqref{eq:expsums}.
We recognize $\widetilde{A}_N f$ as the Fourier transform of the function $\widehat{f} \cdot  S_N$ on the group $\mathbb{T}^d$.
Applying the Hausdorff--Young inequality on $\mathbb{T}^d$, then H\"{o}lder's inequality on $\mathbb{T}^d$, and finally the Hausdorff--Young inequality on $\mathbb{Z}^d$, we get
\[ \|\widetilde{A}_N f\|_{\ell^{p'}(\mathbb{Z}^d)} \leq \|\widehat{f} \cdot S_N\|_{\textup{L}^{p}(\mathbb{T}^d)} \leq \|\widehat{f}\|_{\textup{L}^{p'}(\mathbb{T}^d)} \|S_N\|_{\textup{L}^{s}(\mathbb{T}^d)} \leq \|f\|_{\ell^{p}(\mathbb{Z}^d)} \|S_N\|_{\textup{L}^{s}(\mathbb{T}^d)}, \]
where $1/s=1/p-1/p'=2/p-1$.
Thus, the $\ell^p$-improving inequality depends on the $\textup{L}^s$-norm of the sums $S_N$.
Vinogradov's mean value theorem, as established by Bourgain, Demeter, and Guth \cite{MR3548534}, claims precisely the bound
\[ \|S_N\|_{\textup{L}^s(\mathbb{T}^d)} \lesssim_{d,s,\varepsilon} N^{-\frac{d(d+1)}{2s}+\varepsilon} \]
for any $s>d(d+1)$ and for any fixed $\varepsilon>0$. Note that in the typical formulation of Vinogradov's mean value theorem number $s$ needs to be an even integer, but the analytic proof from \cite{MR3548534} does not require that. Moreover, for $d\geq 3$ one can even remove the $\varepsilon$ by performing the Hardy--Littlewood circle method, as in \cite{MR3658168}*{Sec.~7} or \cite{MR3548534}*{Sec.~5}. Therefore, we actually have
\begin{equation}\label{eq:vinogradov}
\|S_N\|_{\textup{L}^s(\mathbb{T}^d)} \lesssim_{d,s} N^{-\frac{d(d+1)}{2s}}.
\end{equation}
Combining \eqref{eq:vinogradov} with the previous computation we get exactly the $\ell^p$-improving estimate \eqref{eq:highest} for $2-2/(d^2+d+1)<p\leq 2$ and $q=p'$.
Interpolation with the trivial estimates for $q=p$ settles the whole claimed range of $(p,q)$.
\end{proof}

\section{Projection of higher-dimensional averages to one-dimensional ones}
\begin{proof}[Proof of Theorem~\ref{thm:onedim}]
Take an arbitrary polynomial function $P\colon\mathbb{R}\to\mathbb{R}$ of degree $d\geq 3$ mapping $\mathbb{Z}$ back to $\mathbb{Z}$.
Let us write it as
\[ P(x) = a_0 + a_1 x + a_2 x^2 + \cdots + a_{d-1} x^{d-1} + a_d x^d, \]
where $a_0,a_1,\ldots,a_d\in\mathbb{R}$ and, without loss of generality, $a_d>0$.
By solving the Vandermonde linear system in the coefficients of $P$ the conditions $P(0),P(1),\ldots,P(d)\in\mathbb{N}$ imply that $a_0,a_1,\ldots,a_d$ are rational numbers. Moreover, let $v\in\mathbb{N}$ be the least common denominator of $a_1,\ldots,a_d$, so that we can write $a_j=b_j u/v$ for $j=1,\ldots,d$, where $u\in\mathbb{N}$ and $b_1,\ldots,b_d\in\mathbb{Z}$ do not have a common multiple greater than $1$.
By the formula for the Vandermonde determinant we also know that $v$ divides $\prod_{0\leq i<j\leq d}(j-i)$, so it has an upper bound depending only on $d$.
Finally, since the free coefficient of $P$ simply translates the averages \eqref{eq:polyaver}, it is safe to assume that $a_0=0$.

For any given function $g\colon\mathbb{Z}\to\mathbb{C}$ and a fixed number $r\in\{0,1,\ldots,u-1\}$ define $f\colon\mathbb{Z}^d\to \mathbb{C}$ by
\begin{align*}
f(x_1,x_2,\ldots,x_{d-1},x_d) & = \mathbbm{1}_{\{1,2,\ldots,2N\}}(x_1) \mathbbm{1}_{\{1,2,\ldots,2N^2\}}(x_2) \cdots \mathbbm{1}_{\{1,2,\ldots,2N^{d-1}\}}(x_{d-1}) \\
& \quad\ g(a_1 x_1 + a_2 x_2 + \cdots + a_{d-1} x_{d-1} + a_d x_d + r) \\
& \quad\ \mathbbm{1}_{\mathbb{Z}}(a_1 x_1 + a_2 x_2 + \cdots + a_{d-1} x_{d-1} + a_d x_d).
\end{align*}
This way
\begin{equation}\label{eq:choosecomb1}
\|f\|_{\ell^p(\mathbb{Z}^d)} \lesssim_{d,p} N^{\frac{(d-1)d}{2p}} \|g\|_{\ell^p(\mathbb{Z})}.
\end{equation}
For any $d$-tuple
\[ (x_1,x_2,\ldots,x_{d-1},x_d) \in \{1,2,\ldots,N\}\times\{1,2,\ldots,N^2\}\times\cdots\times\{1,2,\ldots,N^{d-1}\}\times\mathbb{Z} \]
satisfying $a_1 x_1 + a_2 x_2 + \cdots + a_d x_d\in \mathbb{Z}$ we have
\[ \widetilde{A}_N f(x_1,x_2,\ldots,x_d) = A_N g(a_1 x_1 + a_2 x_2 + \cdots + a_d x_d + r). \]
From the theory of linear Diophantine equations we know that $b_1\mathbb{Z} + b_2\mathbb{Z} + \cdots + b_d\mathbb{Z} = \mathbb{Z}$.
Hence, for sufficiently large $N\in\mathbb{N}$,
\begin{align}
\|\widetilde{A}_N f\|_{\ell^q (\mathbb{Z}^d)}^q
& \geq \sum_{\substack{x_1\in v\mathbb{Z}\\ 1\leq x_1\leq N}} \cdots
\sum_{\substack{x_{d-1}\in v\mathbb{Z}\\ 1\leq x_{d-1}\leq N^{d-1}}}
\sum_{x_d\in v\mathbb{Z}} \big|(A_N g)(a_1x_1+\cdots +a_d x_d+r)\big|^q \nonumber \\
& \gtrsim_P N^{\frac{(d-1)d}{2}} \sum_{n\in r+u\mathbb{Z}} |(A_N g)(n)|^q. \label{eq:choosecomb2}
\end{align}
Applying \eqref{eq:highest} and combining it with \eqref{eq:choosecomb1} and \eqref{eq:choosecomb2} we obtain
\[ N^{\frac{(d-1)d}{2q}} \Big(\sum_{n\in r+u\mathbb{Z}}|(A_N g)(n)|^q\Big)^{1/q} \lesssim_{P,p,q} N^{-\frac{d(d+1)}{2}(\frac{1}{p}-\frac{1}{q})} N^{\frac{(d-1)d}{2p}} \|g\|_{\ell^p(\mathbb{Z})}. \]
Finally, summing in $r=0,1,\ldots,u-1$ gives estimate \eqref{eq:polyest} for the function $g$.
\end{proof}

\section{Examples} \label{s:examples}
We formulate the examples that show certain sharpness in the $\ell^{p}$-improving inequalities.  These are essentially known, and we include them for completeness.

Counterexamples similar to the ones in \cite{2019arXiv190705734H} show that \eqref{eq:polyest} cannot hold outside the range
\begin{equation}\label{eq:polyconj}
\big\{ (p,q) \,:\, {\textstyle\frac{1}{q}\leq\frac{1}{p}},\ {\textstyle\frac{d}{q}\geq\frac{d-1}{p}},\ {\textstyle\frac{d-1}{q}\geq\frac{d}{p}-1} \big\}.
\end{equation}
Indeed, by taking $f=\mathbbm{1}_{\{P(1),P(2),\ldots,P(N)\}}$ we conclude
\[ \frac{\|A_N f\|_{\ell^q(\mathbb{Z})}}{\|f\|_{\ell^p(\mathbb{Z})}}
\geq \frac{|(A_N f)(0)|}{\|f\|_{\ell^p(\mathbb{Z})}} = \frac{1}{N^{1/p}}, \]
so that $d/q\geq(d-1)/p$. Similarly, by taking $f=\mathbbm{1}_{\{0\}}$ we get
\[ \frac{\|A_N f\|_{\ell^q(\mathbb{Z})}}{\|f\|_{\ell^p(\mathbb{Z})}}
\geq \frac{\big(\sum_{m=1}^{N}|(A_N f)(-P(m))|^q\big)^{1/q}}{\|f\|_{\ell^p(\mathbb{Z})}} = \frac{N^{1/q-1}}{1}, \]
which forces $(d-1)/q\geq d/p-1$.
If one only cares about the case $q=p'$, then \eqref{eq:polyconj} is simply the range
\begin{equation}\label{eq:polyrangeconj}
2-{\textstyle\frac{1}{d}}\leq p\leq 2.
\end{equation}
Theorem~\ref{thm:onedim} leaves a gap between the ranges \eqref{eq:polyrangespecial} and \eqref{eq:polyrangeconj} for large $d$.

Next, it is easy to see that \eqref{eq:highest} cannot hold outside the range
\[ \big\{ (p,q) \,:\, {\textstyle\frac{1}{q}\leq\frac{1}{p}},\ {\textstyle\frac{d^2+d}{q}\geq\frac{d^2+d-2}{p}},\ {\textstyle\frac{d^2+d-2}{q}\geq\frac{d^2+d}{p}-2} \big\}. \]
Indeed, by taking
\[ f = \mathbbm{1}_{\{(1^1,1^2,\ldots,1^d), (2^1,2^2,\ldots,2^d), \ldots, (N^1,N^2,\ldots,N^d)\}} \]
we conclude
\[ N^{-\frac{d(d+1)}{2}(\frac{1}{p}-\frac{1}{q})}
\gtrsim_{d,p,q} \frac{\|\widetilde{A}_N f\|_{\ell^q(\mathbb{Z}^d)}}{\|f\|_{\ell^p(\mathbb{Z}^d)}}
\geq \frac{|(\widetilde{A}_N f)(0,0,\ldots,0)|}{\|f\|_{\ell^p(\mathbb{Z}^d)}} = \frac{1}{N^{1/p}}, \]
so multiplying by $N^{1/p}$ and letting $N\to\infty$ give $(d^2+d)/q\geq(d^2+d-2)/p$.
Similarly, by taking $f=\mathbbm{1}_{\{(0,0,\ldots,0)\}}$ we get
\begin{align*}
N^{-\frac{d(d+1)}{2}(\frac{1}{p}-\frac{1}{q})}
&\gtrsim_{d,p,q} \frac{\|\widetilde{A}_N f\|_{\ell^q(\mathbb{Z}^d)}}{\|f\|_{\ell^p(\mathbb{Z}^d)}}
\\
& \geq \frac{\big( \sum_{m=1}^{N} |(\widetilde{A}_N f)(-m^1,-m^2,\ldots,-m^d)|^q \big)^{1/q}}{\|f\|_{\ell^p(\mathbb{Z})}}
= \frac{N^{1/q-1}}{1}.
\end{align*}
This forces $(d^2+d-2)/q\geq(d^2+d)/p-2$.
If one only cares about the cases $q=p'$, then we are talking about the range
\[ 2-{\textstyle\frac{2}{d^2+d}}\leq p\leq 2. \]
Comparing it to \eqref{eq:polyrangespecial} we see that Theorem~\ref{thm:highdim} is ``asymptotically optimal'' as $d\to\infty$.

\section{Conjectured sparse bounds }  \label{s:sparse}
Recall that a collection of intervals $ \mathcal S$ is said to be \emph{sparse} if for each interval $ I$ there is a subset $ E_I$ so that
$ \lvert  E_I\rvert  > \lvert  I\rvert/4   $ and
$ \{E_I : I\in \mathcal S\}$ are pairwise disjoint.
For $ 0\leq \lambda < 1$, and a sparse collection $ \mathcal S$, set
\begin{align*}
\Lambda _{p,q,\lambda } (f,g) &:= \sum_{I\in \mathcal S} \langle f \rangle _{p, I} \langle g \rangle _{I,q} \lvert  I\rvert ^{1- \lambda }  ,
\\
\textup{where} \quad \langle \phi  \rangle _{I,r} & := \Bigl[ \lvert  I\rvert ^{-1} \sum_{n \in I\cap \mathbb Z } \lvert  \phi (n)\rvert ^{r}   \Bigr] ^{1/r}.
\end{align*}

Concerning the maximal function $ A _{\ast } f = \sup_N A_N f $, where $ A_N$ is defined as in \eqref{eq:polyaver}, the main conjecture would
be that if $ (p,q) \in \mathcal A (P)$, that is when \eqref{eq:polyest} holds,  one has
\begin{equation}  \label{e:Asparse}
\langle  A _{\ast } f,g \rangle \lesssim \sup _{\mathcal S} \Lambda _{p,q',0} (f,g).
\end{equation}
Notice that we are using the conjugate index $ q' = q/(q-1)$ above.
In fact, the main result of \cite{2019arXiv190705734H} is that for the quadratic polynomial $ P (x) = x ^2 $, this is true, except possibly at the boundary of $ \mathcal A (P)$.  Nothing close to this is known for any other polynomial, as far as we know.

Turning to the fractional integral operator, the main conjecture would be that if $ (p,q, \lambda ) \in \mathcal I (P)$, that is when \eqref{e:Iest} holds,  one has
\begin{equation*}
\langle  I _{\lambda  } f,g \rangle \lesssim \sup _{\mathcal S} \Lambda _{p,q', \lambda } (f,g).
\end{equation*}
No such bound is known, even in the quadratic case.

The interest in sparse bounds comes in part as they immediately imply a range of weighted inequalities and vector valued inequalities.
The main results of \cites{2019arXiv190705734H,2019arXiv190902883H} concern sparse bounds for averages over the square integers and the primes. These results seem to be much more difficult than the improving or fractional integral inequalities.

\bibliographystyle{alpha,amsplain}	

\begin{bibdiv}
\begin{biblist}

\bib{MR3548534}{article}{
      author={Bourgain, J.},
      author={Demeter, C.},
      author={Guth, L.},
       title={Proof of the main conjecture in {V}inogradov's mean value theorem
  for degrees higher than three},
        date={2016},
     journal={Ann. of Math. (2)},
      volume={184},
      number={2},
       pages={633\ndash 682},
      review={\MR{3548534}},
}

\bib{MR1654767}{article}{
      author={Christ, M.},
       title={Convolution, curvature, and combinatorics: a case study},
        date={1998},
     journal={Internat. Math. Res. Notices},
      volume={1998},
      number={19},
       pages={1033\ndash 1048},
      review={\MR{1654767}},
}

\bib{2017arXiv170407810C}{article}{
      author={Cladek, L.},
      author={Ou, Y.},
       title={Sparse domination of Hilbert transforms along curves},
        date={2018},
     journal={Math. Res. Lett.},
      volume={25},
      number={2},
       pages={415\ndash 436},
      review={\MR{3826828}},
}

\bib{MR3892403}{article}{
      author={Culiuc, A.},
      author={Kesler, R.},
      author={Lacey, M.~T.},
       title={Sparse bounds for the discrete cubic {H}ilbert transform},
        date={2019},
     journal={Anal. PDE},
      volume={12},
      number={5},
       pages={1259\ndash 1272},
      review={\MR{3892403}},
}

\bib{2019arXiv190902883H}{article}{
      author={Han, R.},
      author={Krause, B.},
      author={Lacey, M.},
      author={Yang, F.},
       title={Averages Along the Primes: Improving and Sparse Bounds},
        date={2020},
     journal={Concr. Oper.},
      volume={7},
      number={1},
       pages={45\ndash 54},
      review={\MR{4072599}},
}

\bib{2019arXiv190705734H}{article}{
      author={Han, R.},
      author={Lacey, M.~T},
      author={Yang, F.},
       title={Averages along the Square Integers: $\ell^p$ improving and
  Sparse Inequalities},
        date={2019},
     journal={arXiv e-prints},
      eprint={https://arxiv.org/abs/1907.05734},
}

\bib{2019arXiv190600329H}{article}{
      author={Hu, B.},
       title={Sparse domination of singular Radon transform},
        date={2019},
     journal={arXiv e-prints},
      eprint={https://arxiv.org/abs/1906.00329},
}

\bib{180409260H}{article}{
      author={Hughes, K.},
       title={$\ell^p$-improving for discrete spherical averages},
        date={2018},
     journal={arXiv e-prints},
      eprint={https://arxiv.org/abs/1804.09260},
}

\bib{MR2188130}{article}{
      author={Ionescu, A.~D.},
      author={Wainger, S.},
       title={{$L^p$} boundedness of discrete singular {R}adon transforms},
        date={2006},
     journal={J. Amer. Math. Soc.},
      volume={19},
      number={2},
       pages={357\ndash 383},
      review={\MR{2188130}},
}

\bib{180509925}{article}{
      author={Kesler, R.},
       title={$\ell^p(\mathbb{Z}^d)$-improving properties and sparse bounds
  for discrete spherical maximal averages},
        date={2018},
     journal={arXiv e-prints},
      eprint={https://arxiv.org/abs/1805.09925},
}

\bib{180906468}{article}{
      author={Kesler, R.},
       title={$\ell^p(\mathbb{Z}^d)$-improving properties and sparse bounds
  for discrete spherical maximal means, revisited},
        date={2018},
     journal={arXiv e-prints},
      eprint={https://arxiv.org/abs/1809.06468},
}

\bib{180409845}{article}{
      author={Kesler, R.},
      author={Lacey, M.~T.},
       title={$\ell^{p}$-improving inequalities for discrete spherical
  averages},
        date={2020},
     journal={Anal. Math.},
      volume={46},
      number={1},
       pages={85\ndash 95},
      review={\MR{4064582}},
}

\bib{181002240}{article}{
      author={Kesler, R.},
      author={Lacey, M.~T.},
      author={Mena Arias, D.},
       title={Sparse bound for the discrete spherical maximal functions},
        date={2020},
     journal={Pure Appl. Anal.},
      volume={2},
      number={1},
       pages={75\ndash 92},
      review={\MR{4041278}},
}

\bib{MR3933540}{article}{
      author={Kesler, R.},
      author={Mena Arias, D.},
       title={Uniform sparse bounds for discrete quadratic phase {H}ilbert
  transforms},
        date={2019},
     journal={Anal. Math. Phys.},
      volume={9},
      number={1},
       pages={263\ndash 274},
      review={\MR{3933540}},
}

\bib{MR3350107}{article}{
      author={Kim, J.},
       title={On discrete fractional integral operators and related
  {D}iophantine equations},
        date={2015},
     journal={Math. Res. Lett.},
      volume={22},
      number={3},
       pages={841\ndash 857},
      review={\MR{3350107}},
}

\bib{2017arXiv170208594L}{article}{
      author={Lacey, M.~T.},
       title={Sparse bounds for spherical maximal functions},
        date={2019},
     journal={J. Anal. Math.},
      volume={139},
      number={2},
       pages={613\ndash 635},
      review={\MR{4041115}},
}

\bib{MR1825254}{article}{
      author={Oberlin, D.~M.},
       title={Two discrete fractional integrals},
        date={2001},
     journal={Math. Res. Lett.},
      volume={8},
      number={1--2},
       pages={1\ndash 6},
      review={\MR{1825254}},
}

\bib{MR2820148}{article}{
      author={Pierce, L.~B.},
       title={On discrete fractional integral operators and mean values of
  {W}eyl sums},
        date={2011},
     journal={Bull. Lond. Math. Soc.},
      volume={43},
      number={3},
       pages={597\ndash 612},
      review={\MR{2820148}},
}

\bib{MR2872554}{article}{
      author={Pierce, L.~B.},
       title={Discrete fractional {R}adon transforms and quadratic forms},
        date={2012},
     journal={Duke Math. J.},
      volume={161},
      number={1},
       pages={69\ndash 106},
      review={\MR{2872554}},
}

\bib{MR1771530}{article}{
      author={Stein, E.~M.},
      author={Wainger, S.},
       title={Discrete analogues in harmonic analysis. {II}. {F}ractional
  integration},
        date={2000},
     journal={J. Anal. Math.},
      volume={80},
       pages={335\ndash 355},
      review={\MR{1771530}},
}

\bib{MR1945293}{article}{
      author={Stein, E.~M.},
      author={Wainger, S.},
       title={Two discrete fractional integral operators revisited},
        date={2002},
     journal={J. Anal. Math.},
      volume={87},
       pages={451\ndash 479},
        note={Dedicated to the memory of Thomas H. Wolff},
      review={\MR{1945293}},
}

\bib{MR3658168}{article}{
      author={Wooley, T.~D.},
       title={Discrete {F}ourier restriction via efficient congruencing},
        date={2017},
     journal={Int. Math. Res. Not. IMRN},
      number={5},
       pages={1342\ndash 1389},
      review={\MR{3658168}},
}

\end{biblist}
\end{bibdiv}

\end{document}